\documentclass[11pt,a4paper]{article}
\usepackage[utf8]{inputenc}
\usepackage[english]{babel}
\usepackage{amsmath}
\usepackage{amsthm}
\usepackage{amsfonts}
\usepackage{amssymb}
\usepackage{array}  
\usepackage{graphicx} 
\usepackage{color}
\usepackage{mathrsfs}
\usepackage{hyperref}
\hypersetup{linktoc=page,linkcolor=red,citecolor=blue,
filecolor=blue,urlcolor=cyan,colorlinks=true}   
\usepackage{subcaption}
\usepackage{multicol}
\usepackage{lipsum}
\usepackage{geometry}
\usepackage{mdwlist}	

\theoremstyle{plain}
\newtheorem{theorem}{Theorem}

\newtheorem{lemma}[theorem]{Lemma}

\theoremstyle{definition}
\newtheorem{definition}[theorem]{Definition}

\newtheorem{remark}[theorem]{Remark}

\newcommand{\E}{\mathbb{E}}
\newcommand{\G}{\mathbb{G}}
\newcommand{\V}{\mathbb{V}}

\newcommand{\Z}{\mathbb{Z}}
\newcommand{\Pro}{\mathbb{P}}

\newcommand{\ee}{\textbf{\textup{e}}}

\newcommand{\calC}{\mathcal{C}}

\newcommand{\calG}{\mathcal{G}}

\title{Exponential decay of the volume for Bernoulli percolation: a proof via stochastic comparison}
\date{}
\author{Hugo Vanneuville\thanks{CNRS and Institut Fourier (Université Grenoble Alpes), hugo.vanneuville@univ-grenoble-alpes.fr.}
}

\begin{document}

\maketitle

\abstract{
Let us consider subcritical Bernoulli percolation on a connected, transitive, infinite and locally finite graph. In this paper, we propose a new (and short) proof of the exponential decay property for the volume of clusters. We do not rely on differential inequalities and rather use stochastic comparison techniques, which are inspired by several works including the paper \textit{An approximate zero-one law} written by Russo in the early eighties.
}
{\footnotesize

\tableofcontents
}

\section{Introduction}

Let us consider \textit{Bernoulli bond percolation} on a connected, locally finite, vertex-transitive and countably infinite graph $\G=(\V,\E)$ (e.g.\ the hypercubic lattice $\Z^d$). `Locally finite' means that the degree of each vertex is finite, and `vertex-transitive' means that for all vertices $v,w$, there exists an automorphism of $\G$ that maps~$v$ to~$w$.

\smallskip

This model is defined as follows: each edge is declared \textit{open} with some probability $p$ and \textit{closed} otherwise, independently of the other edges. We let $\Pro_p$ denote the corresponding product probability measure on $\{0,1\}^{\E}$, where $1$ means open and $0$ means closed. In percolation theory, one is interested in the connectivity properties of the graph obtained by keeping only the open edges. We fix a vertex $o \in \V$ once and for all, we let $\calC_o$ denote the cluster of $o$, i.e.\ the set of all vertices that are connected to $o$ by an open path, and we let $|\calC_o|$ denote its cardinality. The \textit{critical probability} is defined as follows:
\[
p_c = \inf \big\{ p \in [0,1] : \Pro_p\big[|\calC_o|=+\infty\big]>0 \big\}.
\]
Let
\[
\psi_n(p) = \Pro_p\big[|\calC_o| \ge n\big], \quad \forall n \ge 0.
\]

In this paper, we give a new proof of the following theorem, which states that the volume of $\calC_o$ has an exponential tail in the subcritical regime.

\begin{theorem}\label{thm:exp'}
For every $p<p_c$, there exist $c,C>0$ such that
\[
\forall n \ge 0, \quad \psi_n(p) \le Ce^{-cn}.
\]
\end{theorem}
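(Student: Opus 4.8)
The plan is to reveal $\calC_o$ as a disjoint union of small ``pieces'', each contained in a fixed-size set, and to compare the \emph{number} of pieces with the total progeny of a subcritical Galton--Watson process. Fix a finite connected set $S \ni o$, and use vertex-transitivity to attach to every vertex $v$ a copy $S_v$ of $S$ with $v$ playing the role of $o$. I would explore $\calC_o$ by processing ``seeds''. The first seed is $o$; to process a seed $v$, reveal the open cluster $\mathcal{D}_v$ of $v$ inside the induced subgraph on $S_v$ minus the vertices already explored, together with the states of the edges issued from $\mathcal{D}_v$; then add $\mathcal{D}_v$ to the explored region and create a new seed at every as-yet-unseen endpoint, lying outside $S_v$, of an open edge issued from $\mathcal{D}_v$. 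Two elementary facts drive the argument. First, the pieces are pairwise disjoint and their union contains $\calC_o$: following an open path starting at $o$, each time it leaves the set $S_v$ of the current piece it does so through an open edge and so enters the piece of some seed; hence $|\calC_o| \le |S| \cdot \#\{\text{pieces}\}$. Second, conditionally on everything revealed before $v$ is processed, the edges lying inside $S_v$ minus the explored region, and those leaving it, are still i.i.d.\ Bernoulli$(p)$ — this is just the Markov property of the exploration, once one checks that processing a seed only touches edges incident to its piece.

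Combining the second fact with the monotonicity of percolation under passing to a subgraph (and vertex-transitivity to move $S_v$ back to $S$), the number of seeds created when processing $v$ is, conditionally on the past, stochastically dominated by the number $\Xi$ of open edges joining the cluster of $o$ inside $S$ to $\calV \setminus S$. The key point is that $\Xi$ is bounded — by the number of edges leaving $S$ — and that $\E_p[\Xi] =: \varphi_p(S)$. Therefore $\#\{\text{pieces}\}$ is stochastically dominated by the total progeny of a Galton--Watson process with offspring law that of $\Xi$; if $\varphi_p(S) < 1$ this process is subcritical, and since $\Xi$ is bounded its total progeny has an exponential tail. Together with $|\calC_o| \le |S|\cdot\#\{\text{pieces}\}$ this gives $\psi_n(p) \le Ce^{-cn}$. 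So Theorem~\ref{thm:exp'} reduces to the \emph{finite-size criterion}: for every $p < p_c$ there is a finite $S \ni o$ with $\varphi_p(S) < 1$. Nothing up to this point uses differential inequalities or the BK inequality — only the Markov property of the exploration, monotonicity, and the classical exponential tail of subcritical branching.

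Establishing the criterion is where the genuine work lies, and it is the only step that really uses $p < p_c$ rather than the weaker statement that clusters are a.s.\ finite at level $p$. I would prove the contrapositive: if $\varphi_p(S) \ge 1$ for every finite $S \ni o$, then, choosing $p'$ with $p < p' < p_c$ and comparing $\mu_p$ with $\mu_{p'}$ through the standard monotone coupling (a sprinkling argument), one shows that $\mu_{p'}[o \leftrightarrow \partial B_n]$ stays bounded away from $0$ uniformly in $n$, whence $\mu_{p'}[|\calC_o| = \infty] > 0$, contradicting $p' < p_c$. This sprinkling step — not the comparison with the branching process — is the main obstacle, and the place where one pays for the absence of a differential inequality. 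Alternatively, if one is content to take the (equivalent) sharpness of the phase transition for the one-arm event $\{o \leftrightarrow \partial B_n\}$ as a black box, the criterion is immediate and the whole proof becomes short.
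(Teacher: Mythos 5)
Your proposal takes a genuinely different route from the paper: rather than ghost fields plus a stochastic-domination lemma, you set up a Duminil--Copin--Tassion-style renormalization, reducing exponential decay of $\psi_n(p)$ to the existence of a finite $S \ni o$ with $\varphi_p(S) := \E_p[\Xi] < 1$, and then compare the number of seeds to a subcritical Galton--Watson tree with bounded offspring. That reduction is correct and self-contained: the disjointness of the pieces, the Markov property of the exploration, monotonicity in the subgraph, and the exponential tail of the total progeny all go through, and $|\calC_o| \le |S| \cdot \#\{\text{pieces}\}$ converts the tail bound into the desired estimate on $\psi_n(p)$. None of this uses differential inequalities or BK.

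The gap is precisely where you flag it: the \emph{finite-size criterion} that $p < p_c$ implies $\varphi_p(S) < 1$ for some finite $S$. This is not a technical loose end; it \emph{is} the sharpness theorem. Your sketched contrapositive via ``sprinkling'' --- from $\inf_S \varphi_p(S) \ge 1$ to $\inf_n \mu_{p'}[o \leftrightarrow \partial B_n] > 0$ for every $p' > p$ --- is exactly the implication that Duminil-Copin and Tassion prove with the differential inequality
\[
\frac{d}{dp}\,\mu_p[o \leftrightarrow \partial B_n] \;\ge\; \frac{1}{p(1-p)}\,\Bigl(\inf_{S \ni o,\ S \subset B_n}\varphi_p(S)\Bigr)\,\bigl(1-\mu_p[o \leftrightarrow \partial B_n]\bigr),
\]
and no sprinkling argument that avoids pivotal-edge counting is given; asserting it exists is not a proof. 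Your fallback --- taking one-arm sharpness as a black box --- is circular in context: exponential decay of $\mu_p[o \leftrightarrow \partial B_n]$ for $p < p_c$ and exponential decay of $\psi_n(p)$ for $p < p_c$ are equivalent forms of the same statement (both follow from $\chi(p) < \infty$ on the subcritical interval, which is what needs proving). Indeed, if you grant $\chi(p) < \infty$, then $\sum_n \varphi_p(B_n) \le C\,\chi(p) < \infty$ gives $\varphi_p(B_n) \to 0$, and the criterion is immediate --- but this outsources the entire difficulty.

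By contrast, the paper closes this loop without any differential inequality and without invoking sharpness: it proves (Theorem~\ref{thm:exp}) a near-critical comparison $\psi_n\bigl(p(1-m_h(p))\bigr) \le (1-m_h(p))^{-1}\psi_n(p)\,e^{-hn}$ directly from Lemma~\ref{lem:coupl}, using only that conditioning on $\{\calC_o \cap \calG = \emptyset\}$ depresses edge probabilities by at most a factor $1-m_h(p)$ along the exploration. Theorem~\ref{thm:exp'} then follows from the definition of $p_c$ alone: pick $p' \in (p, p_c)$; since $\mu_{p'}[|\calC_o|=\infty]=0$, the magnetization $m_h(p')$ tends to $0$ as $h \downarrow 0$, so for $h$ small enough $p'(1-m_h(p')) \ge p$, and Theorem~\ref{thm:exp} applies. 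That step is the analogue of your missing finite-size criterion, and the ghost field is what makes it soft. To repair your argument in the same spirit you would need an actual mechanism replacing the differential inequality --- which is exactly the contribution of Lemma~\ref{lem:coupl}.
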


Theorem \ref{thm:exp'} was proven independently by Menshikov \cite{Men86} and Aizenman and Barsky \cite{AB87}.\footnote{\cite{Men86,AB87} consider graphs with subexponential growth; see \cite{AV08} for the extension to graphs with exponential growth.} More precisely, they proved that $\Pro_p[\textup{diam}(\calC_o)\ge n]$ decays exponentially fast in the subcritical regime (where $\textup{diam}$ denotes the diameter for the graph distance), and Kesten \cite{Kes81} and Aizenman and Newman \cite{AN84} had previously shown that this was equivalent to Theorem \ref{thm:exp'}. We refer to \cite{DT16,DT17,DRT19,Van22} for more recent proofs of exponential decay of $\Pro_p[\textup{diam}(\calC_o)\ge n]$ and to \cite{Hut20} for a direct proof of Theorem \ref{thm:exp'}. Concerning the more recent proofs, Duminil-Copin and Tassion \cite{DT16,DT17} have proposed a very short proof with a `branching processes flavour' and Duminil-Copin, Raoufi and Tassion \cite{DRT19} have proposed a proof via the so-called OSSS inequality, that extends to several dependent percolation models. For more about these results -- and percolation in general --, see for instance the books \cite{Gri99,BR06} or the lecture notes \cite{Dum17}.

\smallskip

The study of differential inequalities is central in all the proofs of exponential decay, except in the (not for publication) work \cite{Van22}. In \cite{Van22}, we studied $\Pro_p[\textup{diam}(\calC_o)\ge n]$ by using stochastic comparison techniques inspired both by a work of Russo in the early 80's (see~\cite[Section~3]{Rus82}) and by works that combine exploration procedures and coupling techniques in percolation theory (see e.g.\ \cite{DM22}, see also~\cite{GS14} for more about the application of exploration procedures to percolation theory). In the present paper -- which can be read independently of \cite{Van22} --, drawing inspiration from a recent work by Easo \cite{Eas22},\footnote{Among other things, Easo adapts some techniques from \cite{Van22} to the study of the volume of percolation clusters on finite transitive graphs.} we observe that these stochastic comparison techniques actually seem more suitable to study the volume than the diameter, and we propose a short and direct proof of Theorem~\ref{thm:exp'}.

\medskip

To study the volume of clusters, it is relevant to introduce what is often referred to as a `ghost field'. By this, we just mean that, given some parameter $h \in (0,+\infty)$, we color every vertex \textit{green} with probability $1-e^{-h}$, independently of the other vertices and of the bond percolation configuration. We let $\Pro_{p,h}$ denote the product probability measure on $\{0,1\}^{\E} \times \{0,1\}^{\V}$ with this additional feature, where we assign the number $1$ to every green vertex, i.e.
\[
\Pro_{p,h} = \big( p\delta_1+(1-p)\delta_0\big)^{\otimes \E} \otimes \big( (1-e^{-h})\delta_1+e^{-h}\delta_0\big)^{\otimes \V}.
\]
We let $\calG$ denote the set of green vertices and we consider the so-called \textit{magnetization} (this terminology comes from a comparison with analogous quantities that appear in the Ising model):
\[
m_h(p)=\Pro_{p,h} \big[ \calC_o \cap \calG \ne \emptyset \big].
\]
One can note that, for every $p$, $m_h(p)$ goes to $\Pro_p[|\calC_o|=+\infty]$ as $h$ goes to $0$.

\medskip

Theorem \ref{thm:exp'} is a direct consequence of the following \textit{near-critical} sharpness result, which can also be deduced from \cite{Hut20} (with different constants).

\begin{theorem}\label{thm:exp}
Let $p\in(0,1)$ and $h\in(0,+\infty)$, and let $q=p(1-m_h(p))$. Then,
\[
\forall n \ge 0, \quad \psi_n(q) \le \frac{1}{1-m_h(p)} \psi_n(p) e^{-hn}.
\]
\end{theorem}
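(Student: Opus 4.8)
The plan is to convert the asserted inequality into a stochastic domination between cluster-size distributions, and to prove that domination by exploring $\calC_o$ while keeping track of the ghost field. (If $m_h(p)=1$, i.e.\ $p=1$, the statement is vacuous, so assume $m_h(p)<1$.) Conditioning on the percolation configuration, each of the $|\calC_o|$ vertices of $\calC_o$ is non-green with probability $e^{-h}$ independently, so $1-m_h(p)=\nu_{p,h}[\calC_o\cap\calG=\emptyset]=\E_{\mu_p}[e^{-h|\calC_o|}]$. Writing $E=\{\calC_o\cap\calG=\emptyset\}$, this gives
\[
\nu_{p,h}\big[|\calC_o|\ge n\ \big|\ E\big]=\frac{\E_{\mu_p}\big[e^{-h|\calC_o|}\mathbf{1}\{|\calC_o|\ge n\}\big]}{1-m_h(p)}\le\frac{e^{-hn}}{1-m_h(p)}\,\psi_n(p),
\]
using $e^{-h|\calC_o|}\le e^{-hn}$ on $\{|\calC_o|\ge n\}$. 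Hence Theorem \ref{thm:exp} follows once we show $\psi_n(q)\le\nu_{p,h}[|\calC_o|\ge n\mid E]$, i.e.\ that $|\calC_o|$ under $\nu_{p,h}(\cdot\mid E)$ stochastically dominates $|\calC_o|$ under $\mu_q$.

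To prove this domination I would run the usual vertex-by-vertex exploration of $\calC_o$ under $\nu_{p,h}$, revealing in addition the colour of each newly discovered vertex; declare \textsc{Failure} the first time a green vertex is discovered, and \textsc{Success} if the exploration terminates without this happening. Then \textsc{Success} is exactly $E$ (so $\Pro[\textsc{Success}]=1-m_h(p)$) and on \textsc{Success} the explored set equals $\calC_o$. Conditionally on the past, probing a boundary edge $e=\{u,v\}$ with $u$ already discovered produces, independently of the past, one of: ``$e$ closed'' with probability $1-p$; ``$e$ open and $v$ green'' $=\textsc{Failure}$ with probability $p(1-e^{-h})$; ``$e$ open and $v$ non-green'' $=$ ``grow'' with probability $pe^{-h}$. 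The key claim is that, conditionally on \textsc{Success} and on the past, each probe yields ``grow'' with conditional probability at least $q=p(1-m_h(p))$. Granting this, a routine coupling of the two exploration processes (the $\mu_q$-exploration grows a probed boundary edge with probability exactly $q$, the conditioned one with probability at least $q$) realises the cluster of $o$ under $\mu_q$ inside the cluster of $o$ under $\nu_{p,h}(\cdot\mid E)$, which is the domination sought.

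For the key claim, Bayes' rule gives that the conditional ``grow'' probability equals $pe^{-h}Y/\big((1-p)Z+pe^{-h}Y\big)$, where $Y=\Pro[\textsc{Success}\mid\text{past},\ \text{grow}]=\E[e^{-h|\calA|}\mid\cdot]$ and $Z=\Pro[\textsc{Success}\mid\text{past},\ e\ \text{closed}]=\E[e^{-h|\calA'|}\mid\cdot]$, with $\calA$ (resp.\ $\calA'$) the still-undiscovered part of $\calC_o$ after a grow (resp.\ closed) outcome. A one-line rearrangement using only $q\le p$ reduces the claim to $e^{-h}Y\ge(1-m_h(p))Z$. To prove this I would split according to whether the new vertex $v$ is already joined to $o$ by an open path avoiding $e$ (``re-entry'') or not. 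On the re-entry event, growing $e$ does not change $\calC_o$ and merely moves $v$ into the explored set, so there $|\calA|=|\calA'|-1$ and the two sides match. On the complementary event, $\calA$ is $\calA'$ together with the sub-cluster hanging off $v$ through the still-unrevealed edges; bounding that sub-cluster from below by the monotone cluster of $v$ in the unrevealed edges — which, living in a subgraph of $\calH$, is stochastically dominated by $\calC_o$ by transitivity — its ghost-avoidance probability is at least $\E_{\mu_p}[e^{-h|\calC_o|}]=1-m_h(p)$; the residual correlation of this sub-cluster with the re-entry event and with $\calA'$ is absorbed by the Harris (FKG) inequality, all the relevant quantities being monotone functions of the unrevealed edges. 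Assembling the two cases yields $e^{-h}Y\ge(1-m_h(p))Z$.

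I expect the key claim to be the main obstacle: conditioning on the \emph{global} event \textsc{Success} destroys the Markov property of the exploration, so the per-step ``grow'' probability must be recovered through the Bayes identity above; and the re-entry phenomenon rules out a naive independence computation, which is precisely where the Harris inequality and the transitivity-based stochastic domination of the hanging sub-cluster by $\calC_o$ are needed. Everything else — the reduction, the description of the exploration, and the coupling of the two explorations — is routine.
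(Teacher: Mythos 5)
Your overall architecture coincides with the paper's: reduce, via Bayes and the identity $1-m_h(p)=\E_{\mu_p}[e^{-h|\calC_o|}]$, to the stochastic domination $\psi_n(q)\le\nu_{p,h}\big[|\calC_o|\ge n\mid\calC_o\cap\calG=\emptyset\big]$, and prove the domination by exploring $\calC_o$ step by step and showing that, conditionally on the past of the exploration and on $\{\calC_o\cap\calG=\emptyset\}$, each probed boundary edge behaves at least as favourably as a Bernoulli$(q)$. Both you and the paper close the argument with Harris--FKG together with the transitivity/subgraph observation that the relevant ``hanging'' sub-cluster has survival probability (against the ghost field) at least $1-m_h(p)$. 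Where you genuinely diverge is in \emph{how} the per-step bound is established. The paper isolates a clean general principle (Lemma \ref{lem:coupl}): since, given the exploration filtration, $\omega_{e_{k+1}}$ is independent of $A\cap\{e_{k+1}\ \text{not pivotal for }A\}$, it suffices to bound the conditional \emph{pivotal probability} by $\varepsilon=m_h(p)$; pivotality forces the outer endpoint $w_{k+1}$ to be connected to a green vertex through unrevealed edges, and FKG removes the conditioning on $A$ in a single stroke. You instead compute the conditional ``grow'' probability directly by Bayes as $pe^{-h}Y/\big((1-p)Z+pe^{-h}Y\big)$, reduce (correctly, using $q\le p$) to $e^{-h}Y\ge(1-m_h(p))Z$, and then verify this by splitting on the re-entry event and applying FKG to decorrelate $e^{-h|S|}$ from $e^{-h|\calA'|}\mathbf 1_{R^c}$. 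Your computation is sound (I checked the $R$/$R^c$ cases: on $R$ one has $|\calA|=|\calA'|-1$ so $e^{-h}e^{-h|\calA|}=e^{-h|\calA'|}$, and on $R^c$ one has $|\calA|=|\calA'|+|S|-1$ with $\calA'$ and $S$ disjoint, after which FKG for decreasing functions and $\E[e^{-h|S|}]\ge 1-m_h(p)$ finish the job), but it buys nothing extra and is noticeably more delicate: you must track vertex colours in the exploration, manage the $Y/Z$ ratio, and handle the re-entry case separately, none of which arise once one phrases the step in terms of pivotality. The pivotality formulation also gives the stronger conclusion $\mu_{q,\calE_n}\preceq\nu_{p,h,\calH_n}[\omega\in\cdot\mid\calC_o\cap\calG=\emptyset]$ as a domination on the \emph{whole edge configuration}, from which the cluster-size domination is immediate, whereas your coupling of two cluster explorations, while ``routine'', still has to be set up carefully (e.g.\ keeping the two boundary queues synchronised). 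In short: correct, same skeleton, but you reprove the key conditional bound by a heavier Bayes-plus-re-entry computation where the paper's ``pivotal $\Rightarrow$ outer endpoint reaches the ghost'' observation does it in two lines.
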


\begin{proof}[Proof of Theorem \ref{thm:exp'} by using Theorem \ref{thm:exp}]
Let $p<p_c$. Let $p'$ be any number in $(p,p_c)$. Since $m_h(p')$ goes to $0$ as $h$ goes to $0$, we can -- and we do -- choose some $h\in(0,+\infty)$ such that $p'(1-m_h(p'))\ge p$. Then,
\[
\psi_n(p) \overset{\text{Thm \ref{thm:exp}}}{\le} \frac{1}{1-m_h(p')}\psi_n(p')e^{-hn} \le Ce^{-hn},
\]
with $C=1/(1-m_h(p'))$.
\end{proof}

Let us note that Theorem \ref{thm:exp} is the analogue of \cite[Theorem 1.1]{Van22} for the volume. Surprisingly (maybe), the proof of the former is simpler. Theorem \ref{thm:exp} also implies the so-called \textit{mean-field lower bound}, that was proven by Chayes and Chayes~\cite{CC87}:

\begin{theorem}\label{thm:mf}
For every $p\ge p_c$,
\[
\Pro_p\big[|\calC_o|=+\infty\big]\ge (p-p_c)/p.
\]
\end{theorem}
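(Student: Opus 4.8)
The plan is to derive Theorem~\ref{thm:mf} from Theorem~\ref{thm:exp} by proving that the parameter $q=p(1-m_h(p))$ appearing there is always at most $p_c$, and then letting $h\to 0$.

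So fix $p\ge p_c$. We may assume $p<1$, since if $p=1$ every edge is open and, $\calH$ being connected, $\calC_o=\calV$ is infinite, so $\mu_1[|\calC_o|=+\infty]=1\ge (1-p_c)/1$; we may also recall that $p_c>0$ on any locally finite vertex-transitive graph, so that $p\ge p_c>0$. The key step is the claim that $q:=p(1-m_h(p))\le p_c$ for every $h\in(0,+\infty)$. To prove it, I would first observe that $m_h(p)<1$: with positive probability under $\nu_{p,h}$ all of the (finitely many) edges incident to $o$ are closed and $o$ is not green, and on that event $\calC_o\cap\calG=\emptyset$. Hence the constant $1/(1-m_h(p))$ in Theorem~\ref{thm:exp} is finite, and since $\psi_n(p)\le 1$ that theorem gives $\psi_n(q)\le \frac{1}{1-m_h(p)}\,e^{-hn}$, which tends to $0$ as $n\to\infty$. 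On the other hand, the events $\{|\calC_o|\ge n\}$ decrease to $\{|\calC_o|=+\infty\}$, so $\psi_n(q)\to \mu_q[|\calC_o|=+\infty]$; therefore $\mu_q[|\calC_o|=+\infty]=0$, and by monotonicity of $p\mapsto\mu_p[|\calC_o|=+\infty]$ (via the standard coupling of the measures $\mu_p$) together with the definition of $p_c$, this forces $q\le p_c$.

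It then only remains to let $h\to 0$. Since $m_h(p)\to \mu_p[|\calC_o|=+\infty]$ as $h\to 0$ (as recalled just after the definition of the magnetization), the inequality $p(1-m_h(p))\le p_c$ becomes $p\big(1-\mu_p[|\calC_o|=+\infty]\big)\le p_c$, and dividing by $p>0$ and rearranging yields $\mu_p[|\calC_o|=+\infty]\ge (p-p_c)/p$. The whole argument is short once Theorem~\ref{thm:exp} is available; the only points requiring a word of justification---none of them a genuine obstacle---are the strict inequality $m_h(p)<1$ (so that the bound in Theorem~\ref{thm:exp} is not vacuous), the convergence $\psi_n(q)\to\mu_q[|\calC_o|=+\infty]$, and the (standard) stochastic monotonicity in $p$ used to pass from $\mu_q[|\calC_o|=+\infty]=0$ to $q\le p_c$.
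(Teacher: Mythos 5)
Your proof is correct and takes essentially the same route as the paper: both deduce from Theorem~\ref{thm:exp} that $p(1-m_h(p))\le p_c$ for all $h$, and then let $h\to 0$. Your version simply spells out the justification that the paper leaves implicit (namely that $m_h(p)<1$, that the exponential bound forces $\mu_q[|\calC_o|=+\infty]=0$, and that monotonicity in $p$ then gives $q\le p_c$).
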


\begin{proof}[Proof of Theorem \ref{thm:mf} by using Theorem \ref{thm:exp}]
Fix some $p\ge p_c$. For every $h>0$, let $q_h=p(1-m_h(p))$. By Theorem \ref{thm:exp}, $\psi_n(q_h)\rightarrow 0$ as $n\rightarrow+\infty$. As a result, $q_h\le p_c$. The result follows by letting $h$ go to $0$ and using that $m_h(p)$ converges to $\Pro_p[|\calC_o|=~+\infty]$.
\end{proof}

\noindent \textbf{Organization of the rest of the paper:}
\begin{itemize*}
\item Some ideas behind the proof are provided in Section \ref{sec:ideas};
\item The proof is written in Section \ref{sec:gen} (where we propose a general criterion that is not specific to percolation theory) and in Section~\ref{sec:proof} (that contains the proof of Theorem \ref{thm:exp});
\item In Section \ref{sec:motiv}, we discuss our motivations as well as the possibility to prove new results via the stochastic domination techniques proposed in the present paper.
\end{itemize*}

\medskip

\noindent \textbf{Acknowledgments:} I am extremely grateful to Philip Easo for very inspiring discussions and for his work~\cite{Eas22}. I also thank Sébastien Martineau and Christophe Leuridan for comments that helped a lot to improve this manuscript, Tom Hutchcroft for interesting discussions about \eqref{eq:in_exp}, Vincent Beffara and Stephen Muirhead for discussions that helped to simplify some parts of the proof, Barbara Dembin and Jean-Baptiste Gouéré for inspiring feedbacks, and Hugo Duminil-Copin, who encouraged me to include a discussion about possible new results. Moreover, I thank Loren Coquille who noticed that there was still a (discrete) differential inequality flavour in \cite{Van22} -- such a thing does not appear anymore in the present paper. I would like to thank Vincent Tassion, who recommended to combine coupling techniques from~\cite{Van22,Eas22} with ghost field techniques, which led to the present paper. Finally, I would like to thank the anonymous referees for helpful comments.

\section{Some ideas behind the proof}\label{sec:ideas}

How can one control the volume of subcritical clusters? One can first notice that Theorem \ref{thm:exp'} is equivalent to the fact that decreasing $p$ has the following \textit{regularizing effect}:
\begin{center}
If, for a given $p_0$, $|\calC_o|$ is finite $\Pro_{p_0}$-a.s., then for every $p<p_0$, $|\calC_0|$ has an exponential tail under $\Pro_p$.
\end{center}
To prove this, we will show that, given any $\varepsilon>0$ and $p_0$ such that $\Pro_{p_0}\big[|\calC_o|=~+\infty]=0$, \textit{decreasing $p_0$ by $\varepsilon$ has more effect than conditioning on some well-chosen disconnection event~$A$}, essentially in the sense that $\Pro_{p_0-\varepsilon}$ is stochastically smaller than $\Pro_{p_0}[\,\cdot\mid A]$. This disconnection event will be $A=\{ \calC_o \cap \calG = \emptyset\}$ for some parameter $h$.

\smallskip

So the question is now: How can one compare the effect of conditioning on an event to that of slightly decreasing $p$? One can actually do this by exploring a configuration with law $\Pro_{p_0}[\, \cdot \mid A]$ and estimating (at every step of the exploration) the probability that the next edge to be explored has some influence on $A$. If this probability is always small, then slightly decreasing $p$ will have more effect than conditioning on~$A$.

\section{A general criterion to compare the effect of decreasing $p$ to that of conditioning on an event}\label{sec:gen}

\paragraph{A. Explorations.} Let us fix a finite graph $G=(V,E)$ and call $\Pro_p^G$ and $\Pro_{p,h}^{G}$ the analogues of $\Pro_p$ and $\Pro_{p,h}$ on the graph $G$.\footnote{I.e.\ $\Pro_p^G$ is the product Bernoulli measure of parameter $p$ on $\{0,1\}^E$ and $\Pro_{p,h}^G$ is the product Bernoulli measure on $\{0,1\}^E\times\{0,1\}^V$, of parameter $p$ on the edges and $1-e^{-h}$ on the vertices. The proofs in this section actually work in a more general setting, for instance by considering a probability space $(M,\mathcal{F},\rho)$ and considering the measure $\Pro_p^G\otimes\rho$ instead of $\Pro_{p,h}^G$.} We denote by $\omega$ the elements of $\{0,1\}^E$ and by $\eta$ the elements of $\{0,1\}^V$. Moreover, we let $\vec{E}$ be the set of all orderings $(e_1,\dots,e_{|E|})$ of $E$.

\begin{definition}\label{defi:expl}
An \textit{exploration} of $E$ is a map
\[
\begin{array}{rl}
\ee : \{0,1\}^E & \longrightarrow \vec{E}\\
\omega & \longmapsto (\ee_1,\dots,\ee_{|E|})=\big(\ee_1(\omega),\dots,\ee_{|E|}(\omega)\big)
\end{array}
\]
such that $\ee_1$ does not depend on $\omega$ and $\ee_{k+1}$ only depends on $(\ee_1,\dots,\ee_k)$ and $(\omega_{\ee_1},\dots,\omega_{\ee_k})$.\footnote{This means that for every $k \in \{0,\dots,|E|-1\}$, there exists a function $\phi_k$ such that $\forall \omega \in \{0,1\}^E, \; \ee_{k+1}(\omega)=\phi_k\big(\ee_1(\omega),\dots,\ee_k(\omega),\omega_{\ee_1(\omega)},\dots,\omega_{\ee_k(\omega)}\big)$.} Given an exploration $\ee$, $(e,x) \in \vec{E}\times\{0,1\}^E$ and $k \in \{0,\dots,|E|\}$, we denote by $\textup{Expl}_k(e,x)$ the event that $(\ee,\omega)$ coincides with $(e,x)$ at least until step $k$, i.e.\
\[
\textup{Expl}_k(e,x)=\big\{\omega \in \{0,1\}^E : \forall j \in \{1,\dots,k\}, \ee_j=e_j\text{ and }\omega_{e_j}=x_{e_j}\big\}.
\]
(In particular, $\textup{Expl}_0(e,x)=\{0,1\}^E$.) We also let $\textup{Expl}(e,x)=\textup{Expl}_{|E|}(e,x)$.
\end{definition}

We recall that an \textit{increasing} subset $A\subseteq\{0,1\}^E$ is a subset that satisfies ($\omega \in A$ and $\forall e \in E, \quad \omega'_e \ge \omega_e$) $\Rightarrow$ $\omega'\in A$. Moreover, given two probability measures $\mu,\nu$ on $\{0,1\}^E$, we say that $\mu$ is \textit{stochastically smaller} than $\nu$ if $\mu[A]\le \nu[A]$ for every increasing subset $A$, and we denote this property by $\mu \preceq \nu$. Finally, we say that an edge $e \in E$ is \textit{pivotal} for a set $A \subseteq \{0,1\}^E \times \{0,1\}^V$ and an element~$(\omega,\eta)$ if changing only the edge $e$ in $\omega$ (and not changing $\eta$) modifies~$1_A(\omega,\eta)$.

\paragraph{B. The main intermediate lemma.} The key result of this section is the following lemma. We fix an exploration of $E$ for the rest of the section.

\begin{lemma}\label{lem:coupl}
Let $p \in (0,1)$ and $h \in (0,+\infty)$. Moreover, let $A \subseteq \{0,1\}^E \times \{0,1\}^V$ be any non-empty set and let $\varepsilon \in [0,1]$. Assume that for every $(e,x) \in \vec{E}\times\{0,1\}^E$ such that $A \cap \textup{Expl}(e,x)\neq \emptyset$, we have
\[
\forall k \in \{0,\dots,|E|-1\}, \quad \Pro_{p,h}^G \big[ \text{$e_{k+1}$ is pivotal for $A$} \; \big| \; A \cap \textup{Expl}_k(e,x) \big] \le \varepsilon.
\]
Then,
\[
\Pro_{p(1-\varepsilon)}^G \preceq \Pro_{p,h}^G \big[ \omega \in \cdot \; \big| \; A \big].
\]
\end{lemma}

\begin{remark}\label{rk:formal}
\begin{itemize}
\item Recall that we use the letter $\omega$ to denote the elements of $\{0,1\}^E$. So $\Pro_{p,h}^G [ \omega \in \cdot  \mid A]$ denotes the marginal law on $\{0,1\}^E$ of $\Pro_{p,h}^G[ \, \cdot \mid A]$;
\item We have written $\textup{Expl}_k(e,x)$ when we should have written $\textup{Expl}_k(e,x) \times \{0,1\}^V$;
\item Let us take the opportunity of this remark to make the following observation that holds when $(e,x)$ is in the image of $\omega \mapsto (\ee(\omega),\omega)$: In this case, $\textup{Expl}_k(e,x)=\{\omega \in \{0,1\}^E : \forall j \in \{1,\dots,k\}, \omega_{e_j}=x_{e_j}\}$. As a result, $\Pro_{p,h}^G [ \, \cdot \mid \textup{Expl}_k(e,x) ]$ is the probability measure on $\{0,1\}^E\times\{0,1\}^V$ that assigns the value $x_{e_j}$ to $e_j$ for every $j \le k$ and is still the product Bernoulli measure of parameter $p$ on the other edges and of parameter $1-e^{-h}$ on the vertices.
\end{itemize}
\end{remark}

The proof of Lemma \ref{lem:coupl} is based on the following lemma (see e.g.\ \cite[Lemma~1]{Rus82}, \cite[(7.64)]{Gri99} and \cite[Lemma 2.1]{DRT19} for very similar results).

\begin{lemma}\label{lem:gen}
Let $q \in [0,1]$. Moreover, let $\mu$ be a probability measure on $\{0,1\}^E$ and assume that for every $(e,x) \in \vec{E}\times\{0,1\}^E$ such that $\mu\big[\textup{Expl}(e,x)\big]>0$, we have
\[
\forall k \in \{0,\dots,|E|-1\}, \quad \mu \big[ \omega_{e_{k+1}} = 1 \; \big| \; \textup{Expl}_k(e,x) \big] \ge q.
\]
Then, $\Pro_q^G \preceq \mu$.
\end{lemma}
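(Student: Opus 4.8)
The plan is to build a coupling between a configuration $\omega^*$ with law $\mu_{q,E}$ and a configuration $\omega$ with law $\mu$, such that $\omega^* \le \omega$ pointwise; by Strassen's theorem this is exactly equivalent to $\mu_{q,E} \preceq \mu$, and constructing such a monotone coupling is the cleanest route. The coupling will be defined by running the exploration $\ee$ on $\omega$ and, step by step, also revealing an auxiliary independent uniform random variable that we use to generate the bit $\omega^*_{e_{k+1}}$.

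Concretely, I would proceed as follows. Sample $\omega$ according to $\mu$. Run the exploration: at step $k+1$, the edge $e_{k+1}$ is determined by the past $(\ee_1,\ldots,\ee_k)$ and $(\omega_{\ee_1},\ldots,\omega_{\ee_k})$, so conditionally on the event $\textup{Expl}_k(e,x)$ that has occurred, $\omega_{e_{k+1}}$ is a Bernoulli variable with parameter $r_{k+1} := \mu[\omega_{e_{k+1}}=1 \mid \textup{Expl}_k(e,x)] \ge q$ by hypothesis. Introduce an independent uniform variable $U_{k+1}$ on $[0,1]$ and re-express things so that $\omega_{e_{k+1}} = 1_{\{U_{k+1} \le r_{k+1}\}}$ — more carefully, since $\omega$ is already sampled, I would instead let $U_{k+1}$ be, conditionally on the past and on $\omega_{e_{k+1}}$, uniform on $[0,r_{k+1}]$ if $\omega_{e_{k+1}}=1$ and uniform on $(r_{k+1},1]$ if $\omega_{e_{k+1}}=0$; this makes $U_{k+1}$ genuinely uniform on $[0,1]$ and independent of the past, and recovers $\omega_{e_{k+1}} = 1_{\{U_{k+1}\le r_{k+1}\}}$. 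Then set $\omega^*_{e_{k+1}} = 1_{\{U_{k+1} \le q\}}$. Since $q \le r_{k+1}$, we get $\omega^*_{e_{k+1}} \le \omega_{e_{k+1}}$ for every step, hence $\omega^* \le \omega$ coordinatewise. It remains to check that $\omega^*$ has law $\mu_{q,E}$: at each step, conditionally on everything revealed so far (which, on the $\omega^*$ side, amounts to $\omega^*_{e_1},\dots,\omega^*_{e_k}$ together with extra information independent of the future $U$'s), $\omega^*_{e_{k+1}} = 1_{\{U_{k+1}\le q\}}$ is Bernoulli$(q)$ and independent of that past, because $U_{k+1}$ is uniform on $[0,1]$ independently of the past. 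Since the edges $e_1,\dots,e_{|E|}$ are a (random) enumeration of all of $E$, $\omega^*$ is a product of independent Bernoulli$(q)$ bits, i.e.\ has law $\mu_{q,E}$.

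I expect the main obstacle to be the bookkeeping that makes the above genuinely rigorous: one must verify that $U_{k+1}$ is uniform on $[0,1]$ and independent of the $\sigma$-algebra generated by the first $k$ steps (edges explored, values of $\omega$ and of $\omega^*$ there, and earlier $U_j$'s), so that the inductive conditional-law computation for $\omega^*$ goes through; and one must be careful about conditioning on $\mu$-null exploration histories (the hypothesis is only assumed on histories of positive $\mu$-measure, but those of zero measure are never visited, so they cause no trouble). A clean way to organize this is to define the coupling on the probability space carrying $\omega \sim \mu$ and an independent sequence $(V_k)$ of i.i.d.\ uniforms on $[0,1]$, and set $U_{k+1}$ to be the appropriate affine image of $V_{k+1}$ determined by $r_{k+1}$ and $\omega_{e_{k+1}}$; then independence is transparent. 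Once the coupling is in place, $\mu_{q,E} \preceq \mu$ follows immediately since $\omega^* \le \omega$ almost surely implies $\mu_{q,E}[A] = \Pro[\omega^* \in A] \le \Pro[\omega \in A] = \mu[A]$ for every increasing $A$.
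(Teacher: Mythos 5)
Your coupling proof is correct, and it takes a genuinely different (though closely related) route from the paper's. The paper proves the lemma by a terse induction on $|E|$: one decomposes $\mu[A]=\mu^0[A^0]\,\mu[\omega_{\ee_1}=0]+\mu^1[A^1]\,\mu[\omega_{\ee_1}=1]$, with $A^i$ the ``slices'' of the increasing event $A$ after fixing $\omega_{\ee_1}=i$, and uses $A^0\subseteq A^1$ together with the hypothesis $\mu[\omega_{\ee_1}=1]\ge q$ to compare with the analogous decomposition of $\mu_{q,E}[A]$; the exploration structure lets one pass the hypothesis to the conditioned measures $\mu^i$ on the smaller set $E\setminus\{\ee_1\}$, so the induction closes. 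You instead build an explicit monotone coupling with i.i.d.\ auxiliary uniforms $(V_k)$, reconstructing $\omega_{e_{k+1}}$ as $1_{\{U_{k+1}\le r_{k+1}\}}$ and setting $\omega^*_{e_{k+1}}=1_{\{U_{k+1}\le q\}}$. The two proofs encode the same content: your coupling is essentially what the paper's induction unwinds step by step. What the induction buys is brevity (a one-line identity plus the observation $A^0\subseteq A^1$), at the cost of leaving the recursion details to the reader; what your coupling buys is explicitness and the immediate pointwise domination $\omega^*\le\omega$, at the cost of the filtration bookkeeping you correctly flag as the main obstacle. One small point worth spelling out in a final write-up is the verification that $\omega^*\sim\mu_{q,E}$: since the enumeration $(\ee_1,\dots,\ee_{|E|})$ is random and depends on the $U_j$'s, the claim is not literally ``a product of independent bits'' on the nose, but follows from a tower-property computation using that $\ee_{k+1}$ is measurable with respect to $\sigma(U_1,\dots,U_k)$ while $1_{\{U_{k+1}\le q\}}$ is a Bernoulli$(q)$ bit independent of that $\sigma$-algebra, so that $\Pro[\omega^*=z]=\E\bigl[\prod_k q^{z_{\ee_k}}(1-q)^{1-z_{\ee_k}}\bigr]=\prod_{e\in E}q^{z_e}(1-q)^{1-z_e}$ since any realized enumeration visits each edge exactly once. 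You gesture at exactly this, and it does go through, but it is the one place where informality could hide a gap if you were not careful about which $\sigma$-algebra the enumeration is adapted to.
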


\begin{proof}[Proof of Lemma \ref{lem:gen}]
The result can be proven by induction on $|E|$. Throughout the proof, it is important to remember that $\ee_1$ is constant. Let $A\subseteq\{0,1\}^E$ be an increasing set. We want to prove that $\mu[A]\ge\Pro^G_q[A]$. We extend every $\omega \in \{0,1\}^{E\setminus\{\ee_1\}}$ into two configurations $\omega^0,\omega^1\in\{0,1\}^E$ by setting $\omega^i_{\ee_1}=i$. We define $A^0,A^1\subseteq\{0,1\}^{E\setminus\{\ee_1\}}$ by $A^i=\{\omega \in \{0,1\}^{E\setminus\{\ee_1\}} : \omega^i \in A\}$. Moreover, we define two probability measures $\mu^0,\mu^1$ on $\{0,1\}^{E\setminus\{\ee_1\}}$ by $\mu^i=\mu[\omega_{|E\setminus\{\ee_1\}}\in\cdot\mid\omega_{\ee_1}=i]$.

\smallskip

Let $G'$ be the graph obtained from $G$ by erasing $\ee_1$. By the induction hypothesis -- applied to the explorations $\omega\in\{0,1\}^{E\setminus\{\ee_1\}}\mapsto(\ee_2(\omega^i),\dots,\ee_{|E|}(\omega^i))$ --, we have $\mu^i[A^i]\ge \Pro^{G'}_q[A^i]$ for every $i\in \{0,1\}$. As a result,
\begin{multline*}
\mu[A]=\mu[\omega_{\ee_1}=0]\mu^0[A^0]+ \mu[\omega_{\ee_1}=1]\mu^1[A^1]\\
\ge \mu[\omega_{\ee_1}=0]\Pro_q^{G'} \big[A^0\big]+ \mu[\omega_{\ee_1}=1]\Pro_q^{G'} \big[A^1\big].
\end{multline*}
By the assumption of the lemma for $k=0$, we have $ \mu[\omega_{\ee_1}=1] \ge q$. Moreover, $A^0\subseteq A^1$ since $A$ is increasing, so the above is larger than or equal to
\[
(1-q)\Pro_q^{G'} \big[A^0\big]+q \Pro_q^{G'} \big[A^1\big] = \Pro^G_q[A]. \qedhere
\]
\end{proof}

\begin{proof}[Proof of Lemma \ref{lem:coupl}]
Let us prove that the hypothesis of Lemma \ref{lem:gen} holds with $\mu=\Pro_{p,h}^G[\omega \in \cdot \mid A]$ and $q=p(1-\varepsilon)$. Let $(e,x)$ as in the statement of Lemma \ref{lem:coupl}. The third point of Remark \ref{rk:formal} implies that, under $\Pro_{p,h}^G \big[ \, \cdot \mid  \textup{Expl}_k(e,x) \big]$, $\omega_{e_{k+1}}$ is independent of the event $A \cap \{ \text{$e_{k+1} $ is not piv. for $A$} \}$ (because the latter depends only on $\eta$ and $\omega_{|E\setminus\{e_{k+1}\}}$). As a result,
\begin{multline*}
\Pro_{p,h}^G \big[ \omega_{e_{k+1}} = 1 \; \big| \; A \cap \textup{Expl}_k(e,x) \big]\\
\ge \frac{\Pro_{p,h}^G \big[ \omega_{e_{k+1}} = 1, A, \text{$e_{k+1} $ is not piv. for $A$} \; \big| \;  \textup{Expl}_k(e,x) \big]}{\Pro_{p,h}^G \big[ A \; \big| \; \textup{Expl}_k(e,x) \big]}\\
= p \times\Pro_{p,h}^G \big[ \text{$e_{k+1} $ is not piv. for $A$} \; \big| \; A \cap \textup{Expl}_k(e,x) \big]\ge p(1-\varepsilon).
\end{multline*}
We end the proof by applying Lemma \ref{lem:gen}.
\end{proof}

\section{Proof of Theorem \ref{thm:exp}}\label{sec:proof}

Let $G_n=(V_n,E_n)$ denote the graph $\G$ restricted to the ball of radius $n$  around~$o$ (for the graph distance). We keep the notations~$\calC_o$ and~$\calG$ (the cluster of $o$ and the ghost field) in the context of percolation on $G_n$. As in Section \ref{sec:gen}, $\Pro_{p}^{G_n}$ and $\Pro_{p,h}^{G_n}$ are the analogues of $\Pro_p$ and $\Pro_{p,h}$ on the graph $G_n$, and we denote by $\omega$ the elements of $\{0,1\}^{E_n}$.

\begin{lemma}\label{lem:coupl_perco}
Let $p \in (0,1)$ and $h \in (0,+\infty)$, and write $q=p(1-m_h(p))$. Then,
\[
\forall n \ge 0, \quad \Pro_{q}^{G_n} \preceq \Pro_{p,h}^{G_n} \big[ \omega \in \cdot \; \big| \; \calC_o \cap \calG = \emptyset \big].
\]
\end{lemma}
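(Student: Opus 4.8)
The plan is to apply Lemma~\ref{lem:coupl} with $E=\calE_n$, with $(M,\calF,\rho)$ encoding the ghost field on $\calV_n$ (so $M=\{0,1\}^{\calV_n}$ with the product Bernoulli$(1-e^{-h})$ measure, and $\nu_{p,\calE_n}=\nu_{p,h,\calH_n}$), with the disconnection event $A=\{\calC_o\cap\calG=\emptyset\}$, and with $\varepsilon=m_h(p)$ so that $q=p(1-m_h(p))$ is exactly the quantity in the statement. First I would check that $A$ is measurable and that $\nu_{p,h,\calH_n}[A]>0$ — this holds since the all-closed/all-nongreen configuration lies in $A$ and has positive probability. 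It then remains to produce an exploration $\ee$ of $\calE_n$ and verify the pivotality hypothesis: for every step $k$ and every compatible partial history $(x,e)$ with $\nu_{p,h,\calH_n}[A\cap\textup{Expl}(e,x)]>0$,
\[
\nu_{p,h,\calH_n}\big[\text{$e_{k+1}$ pivotal for $A$}\;\big|\;A\cap\textup{Expl}_k(e,x)\big]\le m_h(p).
\]

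The natural exploration to use is the one that reveals $\calC_o$ edge by edge: maintain a growing set of "discovered" vertices (initially $\{o\}$), and at each step let $\ee_{k+1}$ be the first edge (in some fixed reference ordering of $\calE_n$) that has exactly one endpoint among the discovered vertices; when $\omega_{\ee_{k+1}}=1$, add the other endpoint to the discovered set. After the cluster of $o$ is fully revealed (all boundary edges seen to be closed), continue by exploring the rest of $\calE_n$ in the fixed reference order — these later steps are irrelevant since no such edge can be pivotal for $A$. The key observation is that an edge $e_{k+1}$ can be pivotal for $A=\{\calC_o\cap\calG=\emptyset\}$ only if, in the current configuration conditioned on $\textup{Expl}_k(e,x)\cap A$, the endpoint of $e_{k+1}$ not yet attached to $\calC_o$ lies in a green-free component that would reach a green vertex once $e_{k+1}$ is opened — equivalently, opening $e_{k+1}$ connects $o$ to $\calG$. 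But under the conditional law, the configuration outside the explored edges (together with the ghost field) is still a product measure by the second bullet of Remark~\ref{rk:formal}, so conditionally the cluster hanging off that second endpoint, together with its green vertices, is distributed as a fresh percolation-plus-ghost configuration in (a subgraph of) $\calH_n$. The probability that this fresh cluster meets $\calG$ is at most the corresponding probability in the full graph $\calH$, which by vertex-transitivity equals $m_h(p)$ regardless of which vertex we started from. Hence the pivotality probability is bounded by $m_h(p)=\varepsilon$, and Lemma~\ref{lem:coupl} gives $\mu_{q,\calE_n}\preceq\nu_{p,h,\calH_n}[\omega\in\cdot\mid A]$, which is exactly the claim.

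The step I expect to be the main obstacle is making precise the claim that, conditionally on $A\cap\textup{Expl}_k(e,x)$, the event "$e_{k+1}$ is pivotal" is contained in (or has probability bounded by) the event that a fresh independent cluster started at the free endpoint $w$ of $e_{k+1}$ touches $\calG$. Two subtleties must be handled: (i) the conditioning on $A$ also constrains the unexplored part (it forces $\calC_o\cap\calG=\emptyset$ in the \emph{current} configuration, i.e.\ before opening $e_{k+1}$), so one must argue that "$\calC_o$ currently green-free" is automatically satisfied on the pivotal event or else peel it off correctly — here the point is that pivotality of $e_{k+1}$ requires $e_{k+1}$ closed in $\omega$ (so that $\calC_o$ really is the $o$-component and is green-free), and that opening it merges $w$'s component into $\calC_o$; and (ii) the cluster of $w$ in $\calH_n$ is a cluster in a finite, possibly non-transitive subgraph, so one bounds its probability of meeting $\calG$ by monotonicity with the probability in the ambient transitive graph $\calH$, giving the uniform bound $m_h(p)$. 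Once these containment/monotonicity facts are spelled out, conditioning on the explored history only removes edges and vertices, which can only decrease the relevant connection-to-green probability, so the bound $\varepsilon=m_h(p)$ is clean and uniform over $k$ and over $(x,e)$.
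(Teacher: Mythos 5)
Your proposal sets up the application of Lemma~\ref{lem:coupl} exactly as the paper does (same $E$, $M$, $\rho$, $A$, $\varepsilon=m_h(p)$, and the same cluster-revealing exploration), and the observation that, on $A\cap\textup{Expl}_k(e,x)$, pivotality of $e_{k+1}=\{v_{k+1},w_{k+1}\}$ forces $e_{k+1}$ to be closed and forces $w_{k+1}$ to be connected to a green vertex through unexplored edges is also the paper's observation. So the overall route is the right one.

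The genuine gap is in how you remove the conditioning on $A$. You correctly flag it as subtlety (i) but then do not actually resolve it: the sentence ``conditioning on the explored history only removes edges and vertices, which can only decrease the relevant connection-to-green probability'' addresses the conditioning on $\textup{Expl}_k(e,x)$ (which is harmless, being a product measure on the unexplored part by Remark~\ref{rk:formal}), but it does \emph{not} address the conditioning on $A=\{\calC_o\cap\calG=\emptyset\}$, which is a nontrivial constraint on the unexplored edges and unexplored ghost field. Under $\nu_{p,h,\calH_n}[\,\cdot\mid A\cap\textup{Expl}_k(e,x)]$ the cluster hanging off $w_{k+1}$ is \emph{not} distributed as a ``fresh'' percolation-plus-ghost cluster; the events ``$w_{k+1}$ reaches a green vertex'' and ``the yet-unrevealed part of $\calC_o$ avoids green'' live on overlapping unexplored regions and are correlated. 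The paper handles this by the Harris--FKG inequality: the bounding event $B_{k+1}$ is increasing and measurable with respect to the unexplored configuration, $A$ is decreasing, and $\nu_{p,h,\calH_n}[\,\cdot\mid\textup{Expl}_k(e,x)]$ is a product measure, so conditioning on $A$ can only decrease $\Pro[B_{k+1}]$, giving $\nu[B_{k+1}\mid A,\textup{Expl}_k]\le\nu[B_{k+1}\mid\textup{Expl}_k]\le m_h(p)$. (The paper's footnote offers an FKG-free alternative: postpone revealing $e_{k+1}$ until the rest of $\calC_o$ has been explored, then compute the pivotal probability given that larger history.) Your observation that pivotality forces $e_{k+1}$ closed is correct and necessary, but it does not by itself decorrelate $B_{k+1}$ from $A$; without FKG or the modified-exploration trick, the claimed bound by $m_h(p)$ is not justified. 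The remaining ingredients you use — $\nu[A]>0$, the monotonicity $\calH_n\subseteq\calH$, and transitivity giving $m_h(p)$ as a uniform upper bound — are all fine.
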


\begin{proof}
Let us apply Lemma \ref{lem:coupl}, with $A=\{\calC_o\cap\calG=\emptyset\}$ and $\varepsilon=m_h(p)$. To this purpose, we define an exploration of $E_n$ by  fixing an arbitrary ordering of $E_n$ and:
\begin{itemize*}
\item By first revealing $\calC_o$ (i.e.\ by revealing iteratively the edge of smallest index among all the edges that are connected to $o$ by already revealed open edges). Here and below, `smallest index' refers to the arbitrary ordering that we have fixed;
\item Then, by revealing all the other edges (once again by revealing iteratively the edge of smallest index).
\end{itemize*}
Let $(e,x) \in \vec{E}_n\times\{0,1\}^{E_n}$ such that $\{\calC_o\cap\calG=\emptyset\}\cap\textup{Expl}(e,x)\neq\emptyset$. We observe that, if $\textup{Expl}_k(e,x)$ holds and $e_{k+1}$ is pivotal for $\{\calC_o \cap \calG = \emptyset\}$, then one can write $e_{k+1}=\{v_{k+1},w_{k+1}\}$ where:
\begin{itemize*}
\item $v_{k+1}$ is connected to $o$ by open edges in $\{e_1,\dots,e_k\}$ but $w_{k+1}$ is not;
\item $w_{k+1}$ is connected to a green vertex by open edges that do not belong to $\{e_1,\dots,e_{k+1}\}$.
\end{itemize*}
We let $B_{k+1}$ denote the event of the second item. By this observation and then the Harris--FKG inequality\footnote{If one does not want to use the Harris--FKG inequality in this paper, one can continue to explore the configuration with the same rule as before, except that $e_{k+1}$ is not revealed until no other unrevealed edge is connected to $o$, and one can compute the pivotal probability conditionally on this further information.} (see for instance \cite[Theorem 2.4]{Gri99} or \cite[Lemma~3]{BR06}) applied to $\Pro_{p,h}^{G_n} \big[ \, \cdot \mid \textup{Expl}_k(e,x)\big]$ (which is a product of Bernoulli laws by the third point of Remark \ref{rk:formal}) and to the increasing event $B_{k+1}$ and the decreasing event $\{\calC_o\cap\calG=\emptyset\}$, we obtain that
\begin{multline*}
\Pro_{p,h}^{G_n} \big[\text{$e_{k+1}$ is piv.\ for $\{\calC_o \cap \calG = \emptyset\}$} \; \big| \; \calC_o \cap \calG = \emptyset, \textup{Expl}_k(e,x) \big]\\
\le \Pro_{p,h}^{G_n} \big[ B_{k+1} \; \big| \; \calC_o \cap \calG = \emptyset, \textup{Expl}_k(e,x) \big]\\
\overset{\text{Harris--FKG}}{\le} \Pro_{p,h}^{G_n} \big[ B_{k+1} \; \big| \; \textup{Expl}_k(e,x) \big]= \Pro_{p,h}^{G_n} \big[ B_{k+1} \big] \le m_h(p),
\end{multline*}
where in the equality we have used the third point of Remark \ref{rk:formal}. We conclude by applying Lemma \ref{lem:coupl}.
\end{proof}

\begin{proof}[Proof of Theorem \ref{thm:exp}]
We note that $\psi_n(q)=\Pro_q^{G_n} \big[ |\calC_o|\ge n \big]$. As a result, Lemma \ref{lem:coupl_perco} implies that
\begin{align*}
\psi_n(q)\le\Pro_{p,h}^{G_n} \big[ |\calC_o|\ge n \; \big| \; \calC_o \cap \calG = \emptyset
 \big] &\overset{\text{Bayes}}{=} \frac{\psi_n(p) \times \Pro_{p,h}^{G_n}\big[ \calC_o \cap \calG = \emptyset \; \big| \; |\calC_o| \ge n \big]}{1-\Pro_{p,h}^{G_n}\big[ \calC_o \cap \calG \neq \emptyset \big]}\\
& \hspace{0.2cm}=  \frac{\psi_n(p) \times \E_{p,h}^{G_n}\big[ e^{-h|\calC_o|} \; \big| \; |\calC_o| \ge n \big]}{1-\Pro_{p,h}^{G_n}\big[ \calC_o \cap \calG \neq \emptyset \big]}\\
& \hspace{0.2cm}\le \frac{\psi_n(p)}{1-m_h(p)} e^{-hn}. \qedhere
\end{align*}
\end{proof}

\section{Some motivations and some perspectives}\label{sec:motiv}

I am not proving new results in this paper: even the quantitative result (i.e.\ Theorem~\ref{thm:exp}) can be deduced from earlier works (see \cite{Hut20}, where Hutchcroft combines OSSS and ghost field techniques).

My initial goal, in the (not for publication) work  \cite{Van22} and the present paper, was to propose a proof of sharpness \textit{without differential inequalities}. As explained in \cite{DMT21} (where the authors propose a new proof -- also without differential inequalities -- of some scaling relations for 2D Bernoulli percolation), finding approaches  that ``[do] not rely on interpretations (using Russo’s formula) of the derivatives of probabilities
of increasing events using so-called pivotal edges" is interesting because ``such formulas are unavailable in most dependent percolation models''. I have to admit that on the one hand I have found a proof of exponential decay without differential inequalities, but I have not succeeded in extending the proof to other percolation models. Gradually, my aim became more to propose a new point of view on sharpness properties, and as simple a proof of a sharpness phenomenon as I could do. During discussions about such motivations, several colleagues asked me whether these techniques could also be used to prove new results for Bernoulli percolation. This was not my aim but the question is legitimate! I think that the answer is yes and I would like to propose four examples.

\paragraph{A. A new inequality for critical exponents.} In \cite{Van22}, we use similar stochastic domination techniques in order to prove an analogue of Theorem \ref{thm:exp} for the diameter. More precisely, we prove that
\begin{equation}\label{eq:van22}
\forall n \ge m, \quad \theta_{2n}\big( p-2\theta_m(p) \big) \le C \frac{\theta_n(p)}{2^{n/m}},
\end{equation}
where $\theta_n(p)$ is the probability that there is a path from $o$ to the sphere of radius $n$ around $o$ (for the graph distance). Let $\eta_1$ and $\nu$ denote the one-arm and the correlation length exponents (so in particular $\eta_1$ is the exponent that describes the quantity $\theta_n(p_c)$, see for instance \cite{DM21} for precise definitions and for other inequalities involving these exponents). \eqref{eq:van22} implies the following inequality:
\begin{equation}\label{eq:in_exp}
\eta_1 \nu \le 1.
\end{equation}
Similar inequalities for critical exponents that describe the volume of clusters can be proven by using Theorem \ref{thm:exp}, and were already proven in \cite{Hut20}. However, \eqref{eq:in_exp} is new to my knowledge, and the reason why previous techniques (such as those from \cite{Hut20}) do not seem to imply \eqref{eq:in_exp} is that the calculations behind them only seem to work when the exponent of the connection probability is at most~$1$ (see e.g.\ \cite[Section~4.1]{Hut20}). The critical exponent that describes $\psi_n(p_c)$ (which is $1/\delta$ in \cite{Hut20}) is always at most $1$, but this is not the case of $\eta_1$, which is $2$ on trees for instance.

\paragraph{B. The existence of a percolation threshold for finite graphs.} In \cite{Eas22}, P.\ Easo proves the existence of a percolation threshold for sequences of finite graphs by using several techniques including stochastic comparison techniques inspired by \cite{Van22}.\footnote{On the other hand, Easo writes: ``[...] we could instead adapt Hutchcroft’s proof of sharpness from \cite{Hut20}, rather than Vanneuville’s new proof. However, the adaption we found uses the universal tightness result from \cite{Hut21} and consequently yields a slightly weaker final bound.''} The present paper is inspired by both \cite{Van22} and \cite{Eas22}.

\paragraph{C. A new inequality for arm events in the plane, and an application to exceptional times in dynamical percolation.} In this paragraph, we consider Bernoulli percolation on a planar (symmetric) lattice. Let $\alpha_k(n)$ denote the probability of the $k$-arm event from $o$ to distance $n$, at the critical parameter (see e.g.\ \cite[Chapter 2]{GS14} for this terminology). The OSSS inequality implies that
\begin{equation}\label{eq:524}
n^2\alpha_4(n)\alpha_2(n) \ge c.
\end{equation}
(See for instance \cite[Chapter 12, Theorem 40]{GS14}.) I believe that -- if one overcomes some technical difficulties related to the arm separation techniques -- one can prove the following by using stochastic domination techniques similar to those proposed in the present paper:
\begin{equation}\label{eq:524delta}
n^2 \alpha_4(n)\alpha_2(n) \ge cn^c.
\end{equation}
The latter inequality is not only a very slight improvement of \eqref{eq:524}. Indeed, as explained in \cite{TV22}, \eqref{eq:524delta} implies that -- if one considers dynamical percolation at the critical parameter -- there exist \textit{exceptional times} when there are both primal and dual unbounded components.

\smallskip

The inequality \eqref{eq:524} is known for very general planar lattices whereas \eqref{eq:524delta} is known (to my knowledge) only for site percolation on the regular triangular lattice (\cite{SW01}) and bond percolation on $\Z^2$ (see \cite{DMT21}, where the authors use parafermionic observables). I believe that one could prove \eqref{eq:524delta} by combining the following two results/observations:
\begin{enumerate*}
\item As proven by Kesten \cite{Kes81}, the size of the near-critical window for planar percolation is $1/(n^2\alpha_4(n))$;
\item The size of the near critical window is much less than $\alpha_2(n)$. How can one prove this? Let us define an exploration of some percolation event by following an interface that separates a primal macroscopic cluster from a dual macroscopic cluster. I believe that results analogous to Lemma \ref{lem:coupl} (together with RSW techniques) imply that the near-critical window is less than the infimum -- on every vertex $x$ and on every path $\gamma$ -- of the probability of the $4$-arm event at $x$, conditionally on the event $\{x \text{ belongs to the interface}$ and $\text{$\gamma$ is the interface stopped at $x$}\}$. This probability is less than the $2$-arm probability in a space that has the topology of the half-plane, which can be strongly expected to be much smaller than $\alpha_2(n)$.
\end{enumerate*}

\paragraph{D. A question based on the above paragraph.} Can one find other contexts (e.g.\ non-planar) where general stochastic domination lemmas such as Lemma~\ref{lem:coupl} are more quantitative than the OSSS inequality?



{\footnotesize
\bibliographystyle{alpha}
\bibliography{ref_new_proof_sharpness}
}

\end{document}